\documentclass[11pt,english]{amsart}
\usepackage[T1]{fontenc}
\usepackage[latin9]{inputenc}
\usepackage{amsthm}
\usepackage{amstext}
\usepackage{esint}

\makeatletter
\usepackage{amsmath,amsfonts,amssymb,amsthm,epsfig}

\voffset=-1.5cm \textheight=23cm \hoffset=-.5cm \textwidth=16cm
\oddsidemargin=1cm \evensidemargin=-.1cm
\footskip=35pt \linespread{1.10}
\parindent=20pt

\usepackage{color}
\usepackage[final,allcolors=blue,colorlinks=true]{hyperref}


\def\R{\mathbb R}




\def\al{\alpha}

\def\de{\delta}

\def\la{\lambda}

\def\na{\nabla}
\def\Ga{\Gamma}  
\def\Om{\Omega}  
\def\De{\Delta}      



\def\wq{\infty}
\def\pa{\partial}



\newcommand{\medint}{-\kern -,375cm\int}         
\newcommand{\medintinrigo}{-\kern -,315cm\int}
\newcommand\esssup{\text{\rm \,esssup\,}}


\numberwithin{equation}{section}
\textwidth15cm \textheight22cm \flushbottom
\oddsidemargin=0.5cm \evensidemargin=0.5cm
\footskip=35pt \linespread{1.10}
\parindent=20pt
\setcounter{MaxMatrixCols}{30}  

\newtheorem{theorem}{Theorem}[section]

\newtheorem{corollary}[theorem]{Corollary}

\newtheorem{lemma}[theorem]{Lemma}

\newtheorem{proposition}[theorem]{Proposition}

\theoremstyle{definition}


\makeatother

\usepackage{babel}
\begin{document}
\title[Liouville problems]{Nonlinear Liouville problems in a quarter plane} 

                                \author[C.-L. Xiang]{Chang-Lin Xiang}         

\address[]{University of Jyvaskyla, Department of Mathematics and Statistics, P.O. Box 35, FI-40014 University of Jyvaskyla, Finland.}
\email[]{Xiang\_math@126.com}

\begin{abstract}
We answer affirmatively the open problem proposed by Cabr\'e and Tan in their  paper "Positive solutions of nonlinear problems involving the square root of the Laplacian" (see Adv. Math. {\bf 224} (2010), no. 5, 2052-2093).
\end{abstract}

\maketitle

{
\small    
\keywords {\noindent {\bf Keywords:}  Nonexistence; Monotonicity; The method of moving spheres}
\smallskip
\newline
\subjclass{\noindent {\bf 2010 Mathematics Subject Classification: 35B09; 35B53; 35J60}  }
\tableofcontents
}
\bigskip

\section{Introduction and main result}

In this paper, we consider positive solutions of the nonlinear boundary
value problem
\begin{equation}
\begin{cases}
\De u=0, & \text{in }\R_{++}^{n+1},\\
u(x,y)>0, & \text{in }\R_{++}^{n+1},\\
u(0,y)=0 & \text{on }\{x_{n}=0,y\ge0\},\\
{\displaystyle \frac{\pa u}{\pa\nu}}=u^{p} & \text{on }\{x_{n}>0,y=0\},
\end{cases}\label{eq: Cabre-Tan-nD}
\end{equation}
where $n\ge1$, $1\le p<\wq$, $\R_{++}^{n+1}=\left\{ (x_{1},x_{2},\ldots,x_{n},y)\in\R^{n+1}:x_{n}>0,y>0\right\} $
and $\nu$ is the unit out normal to $\R_{++}^{n+1}$ at $\{x_{n}>0,y=0\}$. 

Problem (\ref{eq: Cabre-Tan-nD}) was probably studied first by Cabr\'e
and Tan \cite{Cabre-Tan-2010}. The motivation comes from the study
of the Gidas-Spruck \cite{Gidas-Spruck-1981} type apriori estimates
for solutions of the nonlinear nonlocal problem 
\begin{equation}
\begin{cases}
A_{1/2}u=u^{p} & \text{in }\Om,\\
u>0 & \text{in }\Om,\\
u=0 & \text{on }\pa\Om,
\end{cases}\label{eq: local nonlocal prob.}
\end{equation}
where $\Om\subset\R^{n}$ is a bounded smooth domain and $A_{1/2}$
is the square root of the Laplacian operator $-\De$ in $\Om$ with
zero Dirichlet boundary values on $\pa\Om$. For the precise definition
of $A_{1/2}$, we refer the readers to Cabr\'e and Tan \cite{Cabre-Tan-2010}.
Problem (\ref{eq: Cabre-Tan-nD}) appears as one of the two limiting
equations when applying the method of blow-up to solutions of Eq.
(\ref{eq: local nonlocal prob.}); the other related limiting equation
is given by 
\begin{equation}
\begin{cases}
\De u=0, & \text{in }\R_{+}^{n+1},\\
u(x,y)>0, & \text{in }\R_{+}^{n+1},\\
{\displaystyle \frac{\pa u}{\pa\nu}}=u^{p} & \text{on }\pa\R_{+}^{n+1}.
\end{cases}\label{eq: Related limiting problem}
\end{equation}
It is well known that Eq. (\ref{eq: Related limiting problem}) has
no weak solutions for all $p<(n+1)/(n-1)$ when $n\ge2$ (see e.g.
\cite{Hu-1994,Li-Zhang-2003,Li-Zhu-1995,Ou-1996}). For related Liouville
type problems in the whole space $\R^{n+1}$, we refer to e.g. Caffarelli
et al. \cite{Caffarelli et al. 1989}, Chen, Li and Ou \cite{Chen-Li-1991,Chen-Li-Ou-2006}
and Y.Y. Li \cite{LiYY-2004}. 

By the regularity theory developed in Cabr\'e and Tan \cite{Cabre-Tan-2010},
solutions of Eq. (\ref{eq: Cabre-Tan-nD}) in the weak sense are shown
to be classical in the sense that, any weak solution of Eq. (\ref{eq: Cabre-Tan-nD})
belongs to $C^{2}(\R_{++}^{n+1})\cap C^{1}(\overline{\R_{++}^{n+1}})$.
Thus, we restrict our attention to classical solutions of Eq. (\ref{eq: Cabre-Tan-nD}).
As one of their main results, Cabr\'e and Tan \cite{Cabre-Tan-2010}
obtained the following result (see \cite[Theorem 1.5]{Cabre-Tan-2010}). 

\begin{theorem} \label{thm: Cabre-Tan-2010} Let $n\ge2$ and $1<p\le(n+1)/(n-1)$.
Then, there exists no bounded classical solution to Eq. (\ref{eq: Cabre-Tan-nD}).

Equivalently, there exists no bounded solution of equation 
\[
\begin{cases}
A_{1/2}u=u^{p} & \text{in }\R_{+}^{n},\\
u>0 & \text{in }\R_{+}^{n},\\
u=0 & \text{on }\pa\R_{+}^{n},
\end{cases}
\]
where $A_{1/2}$ is the square root of the Laplacian in $\R_{+}^{n}=\{x_{n}>0\}$
with zero Dirichlet boundary conditions on $\pa\R_{+}^{n}$. \end{theorem}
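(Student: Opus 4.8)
The plan is to derive a contradiction by combining a monotonicity property of bounded solutions with the already-known nonexistence for the wall-free problem \eqref{eq: Related limiting problem}. I would work directly with the harmonic extension formulation \eqref{eq: Cabre-Tan-nD}; recall that the outer normal on $\{x_{n}>0,y=0\}$ points in the $-e_{y}$ direction, so that boundary condition reads $-\pa_{y}u=u^{p}$, and that by the regularity cited above every weak solution is classical, which makes the strong maximum principle and the Hopf lemma available on $\overline{\R_{++}^{n+1}}$.

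First I would show that every bounded solution $u$ is nondecreasing in the $x_{n}$-direction, i.e. $\pa_{x_{n}}u\ge 0$. To this end I would run the method of moving planes in the variable $x_{n}$: for $\la>0$ put $\Si_{\la}=\{0<x_{n}<\la\}\cap\R_{++}^{n+1}$, let $x^{\la}=(x_{1},\dots,x_{n-1},2\la-x_{n},y)$ be the reflection across $\{x_{n}=\la\}$, and set $w_{\la}=u(x^{\la},y)-u(x,y)$. Then $w_{\la}$ is harmonic in $\Si_{\la}$ and, writing $u(x^{\la})^{p}-u(x)^{p}=c_{\la}\,w_{\la}$ with $c_{\la}\ge 0$ by the mean value theorem, it satisfies the linear cooperative boundary condition $-\pa_{y}w_{\la}=c_{\la}w_{\la}$ on $\{y=0\}$; on the reflected Dirichlet wall $\{x_{n}=0\}$ one has $u=0$, hence $w_{\la}=u(x^{\la})>0$ there, while $w_{\la}=0$ on $\{x_{n}=\la\}$. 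Starting the procedure for small $\la$ by a narrow-domain maximum principle and then continuing by increasing $\la$, I would obtain $w_{\la}\ge 0$ in $\Si_{\la}$ for every $\la>0$; applying the Hopf lemma on the face $\{x_{n}=\la\}$ then gives $\pa_{x_{n}}u\ge 0$. \emph{This step is the one I expect to be the main obstacle}: the domain is unbounded and only boundedness of $u$ is assumed, so both the initialization and, above all, the continuation of the sliding planes cannot rely on compactness and instead require a maximum principle valid in unbounded regions, supplied by suitable barriers or a Phragm\'en--Lindel\"of argument that exploits the Dirichlet wall.

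Granting the monotonicity, I would pass to the limit $x_{n}\to+\wq$. Since $u$ is bounded and nondecreasing in $x_{n}$, the pointwise limit $w(x_{1},\dots,x_{n-1},y)=\lim_{x_{n}\to+\wq}u(x_{1},\dots,x_{n},y)$ exists, and by interior and boundary elliptic estimates the convergence holds in $C^{2}_{\loc}$, so that the $x_{n}$-derivatives drop out and $w$ is a bounded harmonic function of $(x_{1},\dots,x_{n-1},y)$ on $\{y>0\}\subset\R^{n}$ satisfying $-\pa_{y}w=w^{p}$ on $\{y=0\}$, now with no Dirichlet wall. Thus $w$ is a solution of \eqref{eq: Related limiting problem} with $n$ replaced by $n-1$. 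Moreover $w\ge u(\cdot,x_{n},\cdot)>0$ for each fixed $x_{n}$, so $w\not\equiv 0$ and, by the strong maximum principle and Hopf's lemma, $w>0$ everywhere. When $n\ge 3$ the reduced tangential dimension is $n-1\ge 2$, with critical exponent $n/(n-2)$; since $p\le(n+1)/(n-1)<n/(n-2)$, the cited nonexistence for \eqref{eq: Related limiting problem} rules out such a $w$, contradicting the existence of $u$.

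It remains to treat the base case $n=2$, in which the reduced problem is posed on the half-plane $\{y>0\}\subset\R^{2}$ (tangential dimension one) and is not covered by the cited results. Here I would apply the method of moving spheres, using the scaling invariance $u\mapsto t^{1/(p-1)}u(t\,\cdot)$ of the boundary equation $-\pa_{y}u=u^{p}$: sliding the reflections of $w$ about spheres centered on $\{y=0\}$ forces $w$ to be independent of the single tangential variable, hence $w=w(y)$; a bounded positive harmonic function of $y$ on $\{y>0\}$ is constant, and a positive constant cannot satisfy $-\pa_{y}w=w^{p}>0$, a contradiction for every $p>1$. Together with the reduction above this disposes of all $n\ge 2$ and $1<p\le(n+1)/(n-1)$. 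Finally, the equivalent nonexistence statement for $A_{1/2}$ is immediate, since \eqref{eq: Cabre-Tan-nD} is exactly the harmonic extension characterization of $A_{1/2}u=u^{p}$ in $\R_{+}^{n}$.
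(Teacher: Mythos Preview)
Your route differs from the one the paper records. Cabr\'e and Tan's proof, as summarized right after the statement, does not use monotonicity in $x_n$: it first applies the Kelvin transform together with moving planes in the translation-invariant directions $x_1,\dots,x_{n-1}$ to show that any solution depends only on $(x_n,y)$, reducing \eqref{eq: Cabre-Tan-nD} to the two-dimensional quarter-plane problem \eqref{eq: Cabre-Tan-1D}; nonexistence of bounded solutions of \eqref{eq: Cabre-Tan-1D} is then obtained from a Hamiltonian identity of Cabr\'e and Sol\`{a}-Morales, and boundedness enters only in that last, essentially one-dimensional, step. Your alternative---prove monotonicity in $x_n$, send $x_n\to\wq$, and invoke the Liouville theorem for the wall-free problem \eqref{eq: Related limiting problem} one dimension down---is a different decomposition, closer in spirit to the paper's own proof of Theorem \ref{thm: Main reuslt}, which also rests on monotonicity.

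The step you yourself flag is a genuine gap, not a mere technicality. Moving planes in the unbounded slab $\{0<x_n<\la\}\times\R^{n-1}\times(0,\wq)$ with a Steklov-type boundary coupling requires both a maximum principle for the initialization and, more seriously, a continuation argument past a critical $\bar\la$; neither follows from a narrow-domain principle alone, because the slab is unbounded in the $(x_1,\dots,x_{n-1})$ and $y$ directions and the set $\{w_\la<0\}$ need not have small measure. You gesture at barriers and Phragm\'en--Lindel\"of but supply no concrete mechanism. The paper sidesteps this entirely: after the reduction to two dimensions, Lemma \ref{lem: monotonicity} obtains monotonicity via moving \emph{spheres} centred at $(-R,0)$, so that all comparison domains $\Om_{R,\la}$ are bounded and the Dirichlet wall supplies the strict starting inequality. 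A smaller point: in your $n=2$ endgame, moving spheres is built on inversions, not on the dilation $u\mapsto t^{1/(p-1)}u(t\cdot)$ you invoke; the conclusion you want is nonetheless immediate from Proposition \ref{prop: Proposition of 6.2}, since the bounded limit $w$ on $\R_+^2$ satisfies $-\De w=0$, $w>0$, $\pa_\nu w=w^{p}\ge 0$, hence is constant, contradicting $\pa_\nu w=w^{p}>0$.
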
 

We briefly review the approach of Cabr\'e and Tan \cite{Cabre-Tan-2010}
in below for later use. Let $n\ge2$ and $1<p\le(n+1)/(n-1)$. Suppose
that $u$ is a classical solution to Eq. (\ref{eq: Cabre-Tan-nD}).
First Cabr\'e and Tan \cite{Cabre-Tan-2010} derived the symmetry
of $u$ with respect to $x_{i}$, $1\le i\le n-1$, by combining the
Kelvin transform and the method of moving planes. Since Eq. (\ref{eq: Cabre-Tan-nD})
is translation invariant with respect to $x_{i}$, $1\le i\le n-1$,
it follows that $u$ depends only on $x_{n}$ and $y$ (see \cite[Proposition 6.3]{Cabre-Tan-2010}).
Hence, Eq. (\ref{eq: Cabre-Tan-nD}) is reduced to the following problem
in the two dimensional quarter plane 
\begin{equation}
\begin{cases}
\De u=0, & \text{in }\R_{++}^{2}=\{x>0,y>0\},\\
u>0, & \text{in }\R_{++}^{2},\\
u=0 & \text{on }\{x=0,y\ge0\},\\
{\displaystyle \frac{\pa u}{\pa\nu}}=u^{p} & \text{on }\{x>0,y=0\}.
\end{cases}\label{eq: Cabre-Tan-1D}
\end{equation}
Then they proved that Eq. (\ref{eq: Cabre-Tan-1D}) has no bounded
classical solution by applying a Hamiltonian identity for the half-Laplacian
found by Cabr\'e and Sol\`{a}-Morales \cite{Cabre-Sola-Morales-2005}.
In this way, Theorem \ref{thm: Cabre-Tan-2010} is proved. 

Some remarks are in order. First, we remark that to reduce Eq. (\ref{eq: Cabre-Tan-nD})
to Eq. (\ref{eq: Cabre-Tan-1D}), the boundedness assumption of the
solution is not needed. Thus the boundedness assumption is only used
when deriving the nonexistence of solutions of Eq. (\ref{eq: Cabre-Tan-1D}).
Next, we remark that, under the boundedness assumption of the solution,
Cabr\'e and Tan \cite{Cabre-Tan-2010} derived nonexistence results
for equations of type (\ref{eq: Cabre-Tan-1D}) under far more general
boundary conditions (see Cabr\'e and Tan \cite[Proposition 6.4]{Cabre-Tan-2010}).
However, they pointed out that Theorem \ref{thm: Cabre-Tan-2010}
is open without the assumption of boundedness of the solution. 

In this paper, we remove their boundedness assumption. The following
theorem is our main result. 

\begin{theorem} \label{thm: Main reuslt} Let $n\ge1$. Assume that
$1\le p\le(n+1)/(n-1)$ for $n\ge2$ and $1\le p<\wq$ for $n=1$.
Then, there exists no classical solution to Eq. (\ref{eq: Cabre-Tan-nD}).
\end{theorem}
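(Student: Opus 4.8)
The plan is to reduce everything to the two-dimensional problem \eqref{eq: Cabre-Tan-1D} and then to establish its nonexistence with no boundedness hypothesis. For $n\ge 2$ one keeps the reduction recalled above: combining the Kelvin transform with the method of moving planes in the translation-invariant variables $x_{1},\dots,x_{n-1}$ shows that any classical solution of \eqref{eq: Cabre-Tan-nD} depends only on $x_{n}$ and $y$, and, as noted above, this step does not use the boundedness of $u$. For $n=1$ the equation is already \eqref{eq: Cabre-Tan-1D} with $p$ unrestricted. Hence it suffices to prove that \eqref{eq: Cabre-Tan-1D} has no positive classical solution for every $1\le p<\wq$.

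The first main step is the monotonicity $\pa_{x}u>0$ in the open quarter plane. I would run the method of moving planes in the $x$-direction: for $\la>0$ put $w_{\la}(x,y)=u(x,y)-u(2\la-x,y)$ on the strip $\{0<x<\la\}$. Then $w_{\la}$ is harmonic, vanishes on the reflection line $\{x=\la\}$, is strictly negative on the Dirichlet wall $\{x=0\}$ (where $u=0$ while $u(2\la,\cdot)>0$), and, after writing $u(x,0)^{p}-u(2\la-x,0)^{p}=c_{\la}(x)\,w_{\la}(x,0)$ with $c_{\la}\ge0$, satisfies a homogeneous Robin-type condition on $\{y=0\}$. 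The maximum principle and the Hopf lemma then force $w_{\la}\le0$, that is $u(x,y)\le u(2\la-x,y)$; choosing $\la=(x_{1}+x_{2})/2$ for $0<x_{1}<x_{2}$ gives $u(x_{1},y)\le u(x_{2},y)$, and the strong maximum principle upgrades this to $\pa_{x}u>0$.

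The difficulty --- and precisely where boundedness entered in Cabr\'e--Tan --- is the control of $u$ as $|(x,y)|\to\wq$, needed both to initiate the moving planes on the noncompact strip and to close the scheme for all $\la>0$. This is where the method of moving spheres is decisive: composing $u$ with the inversion $(x,y)\mapsto (x,y)/(x^{2}+y^{2})$, which maps the quarter plane, the Dirichlet wall and the reaction boundary to themselves, turns the behaviour at infinity into behaviour near the origin and supplies the comparisons required to start and to close the argument uniformly, without any a priori decay. I expect this to be the main obstacle, since the inversion produces an extra conformal factor in the reaction condition, and one must extract from the whole family of inversions both the monotonicity and a pointwise growth bound on $u$.

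With $\pa_{x}u>0$ in hand I would finish by a limiting-profile argument. By monotonicity $V(y):=\lim_{x\to\wq}u(x,y)$ exists in $(0,\wq]$ and satisfies $V(y)\ge u(1,y)>0$. If $V$ is finite, interior elliptic estimates give convergence of the translates $u(\,\cdot+t,\cdot\,)$ in $C^{2}_{\loc}$ to the $x$-independent harmonic limit $V$, so $V''\equiv0$ and $V$ is affine, while the reaction condition passes to the limit as $-V'(0)=V(0)^{p}$; then $V(y)=V(0)-V(0)^{p}\,y$ becomes negative for large $y$, contradicting $V>0$. The only remaining possibility, $V\equiv\wq$, is unbounded growth in $x$, and it is ruled out by the pointwise a priori bound produced by the moving spheres (equivalently, by rescaling at points where $u(\cdot,0)$ is large and invoking nonexistence for the limiting half-plane reaction problem). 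This covers all $1\le p<\wq$, the case $p=1$ needing only an obvious modification of the scaling in the final step, and completes the proof of Theorem~\ref{thm: Main reuslt}.
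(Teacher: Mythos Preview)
Your overall architecture---reduce to \eqref{eq: Cabre-Tan-1D}, prove $u_{x}>0$, then derive a contradiction---matches the paper, but both of the two substantive steps contain gaps.

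\textbf{Monotonicity.} You correctly identify that moving planes on the unbounded strip $\{0<x<\la\}$ fails without control at infinity, and you propose to rescue it by the inversion $(x,y)\mapsto(x,y)/(x^{2}+y^{2})$. But this is where your argument stops being a proof: you do not actually execute the moving-spheres step, you only announce that it should work and flag the conformal factor as ``the main obstacle.'' The paper's choice is different and is what makes the argument close: it inverts through the point $z_{R}=(-R,0)$ \emph{outside} the quarter plane, with radius $\la>R$. The comparison domain $\Om_{R,\la}=B_{\la}(z_{R})\cap\R_{++}^{2}$ is then \emph{bounded}, and the narrow-domain technique (multiply by $w_{\la}^{+}$, integrate, use a Sobolev--Poincar\'e inequality with the measure of the boundary contact set $A_{\la}$) both starts the process for $\la-R$ small and continues it for all $\la>R$. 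No behaviour at infinity is ever invoked. Sending $R\to\infty$ afterwards yields $u(x_{1},y)\le u(x_{2},y)$ for $x_{1}<x_{2}$.

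\textbf{Conclusion from monotonicity.} Your limiting-profile argument is incomplete: the case $V\equiv\infty$ is dismissed by appealing to ``the pointwise a priori bound produced by the moving spheres,'' but no such bound has been established (and moving spheres centred at $(-R,0)$ give monotonicity, not growth control). The paper bypasses this entirely. Once $u_{x}\ge0$, take the odd extension $\bar{u}$ across $\{x=0\}$; then $\bar{u}_{x}$ is nonnegative, harmonic in $\R_{+}^{2}$, and satisfies $\pa_{\nu}\bar{u}_{x}=p|\bar{u}|^{p-1}\bar{u}_{x}\ge0$ on $\pa\R_{+}^{2}$. The Liouville-type Proposition~\ref{prop: Proposition of 6.2} (any nonnegative superharmonic function on $\R_{+}^{2}$ with $\pa_{\nu}v\ge0$ is constant) forces $\bar{u}_{x}\equiv C$, hence $u(x,y)=Cx$, which violates the reaction condition. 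This step needs no growth bound and works for all $1\le p<\infty$ uniformly.
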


We prove Theorem \ref{thm: Main reuslt} in Section \ref{sec: Proof-of-main result}.
The idea is as follows. First note that by the symmetry result of
Cabr\'e and Tan \cite[Proposition 6.3]{Cabre-Tan-2010}, Eq. (\ref{eq: Cabre-Tan-nD})
is reduced to Eq. (\ref{eq: Cabre-Tan-1D}). Then, as a key gradient,
we show that any positive solution of Eq. (\ref{eq: Cabre-Tan-1D})
is monotone increasing in the $x$-direction. This idea is inspired
by the work of Li and Lin \cite{Li-Lin-2012}, where a nonlinear elliptic
PDE with two Sobolev-Hardy critical exponents are considered. Finally,
combining the monotonicity result together with the very general result
of Cabr\'e and Tan \cite[ Proposition 6.2]{Cabre-Tan-2010} (see
Proposition \ref{prop: Proposition of 6.2} below), we obtain Theorem
\ref{thm: Main reuslt}. To complete the proof of Theorem \ref{thm: Main reuslt},
we will give some necessary results in the next Section \ref{sec: Some-necessary-results}.
In the last section, we give an extension of Theorem \ref{thm: Main reuslt},
which can be seen as an analogue of Cabr\'e and Tan \cite[Proposition 6.4]{Cabre-Tan-2010}.

Our notations are standard. $B_{R}(x)$ is the open ball in $\R^{N}$
centered at $x$ with radius $R>0$. Whenever $E\subset\R^{N}$ is
a Lebesgue measurable set, we denote by $|E|$ the $N$-dimensional
Lebesgue measure of set $E$. Let $\Om$ be an arbitrary domain in
$\R^{N}$. For any $1\le s\le\infty$, $L^{s}(\Om)$ is the Banach
space of Lebesgue measurable functions $u$ such that the norm 
\[
\|u\|_{s,\Om}=\begin{cases}
\left(\int_{\Om}|u|^{s}\right)^{\frac{1}{s}} & \text{if }1\le s<\infty\\
\esssup_{\Om}|u| & \text{if }s=\infty
\end{cases}
\]
is finite. A function $u$ belongs to the Sobolev space $W^{1,s}(\Om)$
if $u\in L^{s}(\Om)$ and its first order weak partial derivatives
also belong to $L^{s}(\Om)$. For the properties of the Sobolev functions,
we refer to the monograph \cite{Ziemer}.

\section{Some preliminaries\label{sec: Some-necessary-results}}

In this section we collect some useful results for later use. The
first one concerns with Sobolev-Poincar\'e type inequalities in planar
domains, which will be used in the proof of Lemma \ref{lem: monotonicity}.

\begin{lemma} \label{lem: Sobolev-Poincare inequality}Let $\Om\subset\R_{+}^{2}$
be a bounded Lipschitz domain with a partial boundary $\Ga\subset\pa\R_{+}^{2}$
($\Ga$ could be an empty set). Then for any number $q$, $1\le q<\wq$,
there exists a constant $C_{q}$, depending only on $q$, such that
the following inequality holds 
\[
\|u\|_{q,\Om}\le C_{q}|\Om|^{\frac{1}{q}}\|\na u\|_{2,\Om}
\]
for all functions $u\in W^{1,2}(\Om)\cap C(\Om\cup\Ga)$ with $u=0$
on $\pa\Om\cap\R_{+}^{2}$. \end{lemma}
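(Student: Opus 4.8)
The plan is to reduce the assertion to a Sobolev--Poincar\'e inequality for compactly supported functions on the whole plane $\R^2$, where all constants are universal and the geometry of $\Om$ enters only through the measure $|\Om|$. Two structural features of the hypotheses make this possible: $u$ vanishes on $\Si:=\pa\Om\cap\R_+^2$ (the part of $\pa\Om$ lying strictly inside the half-plane), while on $\Ga\subset\pa\R_+^2$ (the flat part of the boundary) no vanishing is assumed. The first feature will let me extend $u$ by zero across $\Si$; the second forces me to cross $\Ga$ by \emph{even} reflection rather than by a zero extension. Neither operation introduces a constant depending on the Lipschitz character of $\Om$, which is precisely why the final constant $C_q$ will depend on $q$ only.

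First I would set $\bar u=u$ on $\Om$ and $\bar u=0$ on $\R_+^2\setminus\Om$. Since $u\in W^{1,2}(\Om)$ has zero trace on the relatively open portion $\Si$ of $\pa\Om$, the zero extension $\bar u$ belongs to $W^{1,2}(\R_+^2)$ with $\na\bar u=\na u\,\mathbb 1_\Om$; in particular $\|\na\bar u\|_{2,\R_+^2}=\|\na u\|_{2,\Om}$, $\|\bar u\|_{q,\R_+^2}=\|u\|_{q,\Om}$, and $\bar u$ is supported in $\overline\Om$. Next I reflect evenly across the line $\pa\R_+^2$, defining $v(x,y)=\bar u(x,|y|)$ on $\R^2$. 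Even reflection maps $W^{1,2}(\R_+^2)$ into $W^{1,2}(\R^2)$ irrespective of the boundary values on $\pa\R_+^2$, so $v\in W^{1,2}(\R^2)$, and the bookkeeping gives
\[
\|v\|_{q,\R^2}^q=2\|u\|_{q,\Om}^q,\qquad \|\na v\|_{2,\R^2}^2=2\|\na u\|_{2,\Om}^2,
\]
while $v$ vanishes outside $\overline\Om\cup\overline{\Om^{*}}$, a set of measure at most $2|\Om|$ (here $\Om^{*}$ is the reflection of $\Om$). Thus it suffices to prove, for every $v\in W^{1,2}(\R^2)$ vanishing outside a set of finite measure $m$, the inequality $\|v\|_{q,\R^2}\le C_q\,m^{1/q}\|\na v\|_{2,\R^2}$ with $C_q$ depending only on $q$; feeding the bookkeeping above into this bound and absorbing the factors of $2$ yields the lemma.

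For the whole-plane inequality I would argue from the classical two-dimensional Sobolev inequality $\|w\|_{2,\R^2}\le C\|\na w\|_{1,\R^2}$, valid for all $w\in W^{1,1}(\R^2)$. For $q=2$ one applies this to $w=|v|$ and estimates $\|\na|v|\|_{1,\R^2}\le\int_{\{v\neq0\}}|\na v|\le m^{1/2}\|\na v\|_{2,\R^2}$ by Cauchy--Schwarz, giving the claim with $C_2=C$. For $q>2$ one applies it to $w=|v|^{q/2}$; using $|\na w|=\tfrac q2|v|^{q/2-1}|\na v|$, then H\"older's inequality, and finally H\"older on the support, one obtains
\[
\|v\|_{q,\R^2}^{q/2}=\|w\|_{2,\R^2}\le C\,\frac q2\Bigl(\int|v|^{q-2}\Bigr)^{1/2}\|\na v\|_{2,\R^2}\le C\,\frac q2\,m^{1/q}\|v\|_{q,\R^2}^{(q-2)/2}\|\na v\|_{2,\R^2},
\]
and dividing by $\|v\|_{q,\R^2}^{(q-2)/2}$ (the case $\|v\|_{q,\R^2}=0$ being trivial) leaves $\|v\|_{q,\R^2}\le \tfrac{Cq}{2}m^{1/q}\|\na v\|_{2,\R^2}$, so $C_q=Cq/2$. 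The remaining range $1\le q<2$ follows from the case $q=2$ together with $\|v\|_{q,\R^2}\le m^{1/q-1/2}\|v\|_{2,\R^2}$. The chain-rule identity for $|v|^{q/2}$ and the membership $|v|^{q/2}\in W^{1,1}(\R^2)$ are justified by the usual truncation/approximation for Sobolev functions, the integrability of the relevant terms having been checked above.

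The main obstacle is not any single estimate but keeping every constant independent of $\Om$. The tempting route is to invoke a Sobolev extension operator for the Lipschitz domain $\Om$, but its norm depends on the Lipschitz character of $\pa\Om$ and would destroy the claimed uniformity. The whole point is that the two boundary conditions permit \emph{norm-preserving} passages to the whole plane---zero extension across $\Si$ and even reflection across $\Ga$---after which only the universal two-dimensional Sobolev inequality and H\"older's inequality are used. One should also note the genuinely two-dimensional, endpoint nature of the estimate: $W^{1,2}(\R^2)$ just fails to embed into $L^\infty$, so $C_q$ must blow up as $q\to\wq$, consistent with the bound $C_q\sim q$ produced above; this is harmless since $q$ is fixed.
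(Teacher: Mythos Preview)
Your argument is correct and complete. Both you and the paper handle the flat portion $\Ga$ by the same even-reflection trick, but the core inequalities differ. The paper, after reducing to the case $\Ga=\emptyset$ (so that $u\in W^{1,2}_0$ of the reflected domain), invokes the Trudinger--Moser inequality $\int_\Om e^{\al|u|^2}\le C_\al|\Om|$ for $\|\na u\|_{2,\Om}=1$, reads off the bound for even integers $q=2k$ from the Taylor expansion of the exponential, and interpolates. You instead push all the way to $\R^2$ by zero extension across $\Si$ followed by even reflection, and then rely only on the elementary Gagliardo--Nirenberg--Sobolev inequality $\|w\|_{2,\R^2}\le C\|\na w\|_{1,\R^2}$ applied to $w=|v|^{q/2}$, together with H\"older on the support. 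Your route is lighter---it avoids the Trudinger--Moser machinery entirely---and it produces the explicit growth $C_q\sim q$, which you correctly connect to the failure of the borderline embedding $W^{1,2}(\R^2)\hookrightarrow L^\infty$. The paper's route, while invoking a deeper tool, has the advantage of being a one-line consequence once Trudinger--Moser is on the table.
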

\begin{proof}
This lemma may be well known to specialist. We give a sketch of proof
for the reader's convenience. 

First consider the case $\Ga=\emptyset$. In this case, Lemma \ref{lem: Sobolev-Poincare inequality}
is a direct consequence of the Trudinger-Moser inequality (see \cite{Moser-1967,Pohozaev-1965,Trudinger-1980})
\[
\sup_{\|\na u\|_{2,\Om}=1}\int_{\Om}e^{\al|u|^{2}}\le C_{\al}|\Om|,
\]
where $\al\le4\pi$ and $C_{\al}>0$ is a constant depending only
on $\al$. Take $\al=1$. We obtain that 
\[
\|u\|_{2k,\Om}\le C_{k}|\Om|^{\frac{1}{2k}}\|\na u\|_{2,\Om}
\]
for all $k\in\{1,2.\ldots\}$. Now, Lemma \ref{lem: Sobolev-Poincare inequality}
follows easily from above and H\"older's inequality in the case $\Ga=\emptyset$. 

In the general case when $\Ga\neq\emptyset$, it suffices to consider
the even extension 
\[
\tilde{u}(x,y)=\begin{cases}
u(x,y) & \text{for }y\ge0\\
u(x,-y) & \text{for }y<0
\end{cases}
\]
 for $u\in W^{1,2}(\Om)\cap C(\Om\cup\Ga)$ with $u=0$ on $\pa\Om\cap\R_{+}^{2}$.
Then this case is reduced to the previous one. The proof of Lemma
\ref{lem: Sobolev-Poincare inequality} is finished. 
\end{proof}
The next very general result is Proposition 6.2 of Cabr\'e and Tan
\cite{Cabre-Tan-2010} (see also Chipot et al. \cite{Chipot et al-1998}),
which will be used in the proof of Theorem \ref{thm: Main reuslt}.

\begin{proposition} \label{prop: Proposition of 6.2}Suppose that
$v$ weakly solves 
\[
\begin{cases}
-\De v\ge0 & \text{in }\R_{+}^{2},\\
v\ge0 & \text{in }\R_{+}^{2},\\
{\displaystyle \frac{\pa v}{\pa\nu}}\ge0 & \text{on }\pa\R_{+}^{2}.
\end{cases}
\]
Then $v$ is a constant. \end{proposition}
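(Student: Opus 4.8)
The plan is to work entirely with the natural weak formulation and to exploit a two-dimensional logarithmic cutoff. First I would make explicit what it means that $v$ weakly solves the system: $v\in W^{1,2}_{\loc}(\overline{\R_+^2})$ with $v\ge0$, and
\[
\int_{\R_+^2}\na v\cdot\na\varphi\,\D x\,\D y\ge0
\]
for every nonnegative $\varphi\in C_c^\infty(\overline{\R_+^2})$, where crucially the test functions are \emph{not} required to vanish on $\pa\R_+^2$. By Green's identity this single inequality simultaneously encodes $-\De v\ge0$ in $\R_+^2$ and $\pa v/\pa\nu\ge0$ on $\pa\R_+^2$, so the boundary condition needs no separate treatment.

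Next, for $\ep>0$ I set $v_\ep=v+\ep$, so that $v_\ep\ge\ep>0$ and $v_\ep^{-1}\le\ep^{-1}$, and I test the inequality against $\varphi=\eta^2 v_\ep^{-1}$, where $\eta\in C_c^\infty(\R^2)$ is a cutoff to be chosen. Since $\na\varphi=2\eta\, v_\ep^{-1}\na\eta-\eta^2 v_\ep^{-2}\na v$ and $\na v=\na v_\ep$, the inequality rearranges to
\[
\int_{\R_+^2}\eta^2 v_\ep^{-2}|\na v|^2\le 2\int_{\R_+^2}\eta\, v_\ep^{-1}|\na v|\,|\na\eta|,
\]
and a single application of the Cauchy--Schwarz inequality, followed by absorbing the resulting half-power of the left-hand side, yields the Caccioppoli-type bound
\[
\int_{\R_+^2}\eta^2 v_\ep^{-2}|\na v|^2\le 4\int_{\R_+^2}|\na\eta|^2\le 4\int_{\R^2}|\na\eta|^2.
\]
The exponent $-1$ on $v_\ep$ is chosen precisely so that $v_\ep$ disappears from the right-hand side.

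The decisive, dimension-specific step is to make the right-hand side vanish. In the plane one takes the logarithmic cutoff $\eta=1$ on $B_R(0)$, $\eta=0$ off $B_{R^2}(0)$, and $\eta(z)=(2\log R-\log|z|)/\log R$ in the annulus in between; a direct computation gives $\int_{\R^2}|\na\eta|^2=2\pi/\log R\to0$ as $R\to\wq$. Feeding this into the Caccioppoli bound and letting $R\to\wq$ gives $\int_{\R_+^2}v_\ep^{-2}|\na v|^2=0$. Since $v_\ep^{-2}>0$ everywhere, this forces $\na v\equiv0$, and hence $v$ is constant on the connected domain $\R_+^2$ (the fixed value of $\ep$ plays no further role).

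The one point requiring care --- and the main, though routine, technical obstacle --- is the admissibility of $\varphi=\eta^2 v_\ep^{-1}$. Because $v_\ep\ge\ep$, the map $s\mapsto s^{-1}$ is Lipschitz on the range of $v_\ep$, so $v_\ep^{-1}\in W^{1,2}_{\loc}\cap L^\infty$ with $\na(v_\ep^{-1})=-v_\ep^{-2}\na v$; multiplying by the compactly supported $\eta^2$ produces a nonnegative $W^{1,2}$ function with compact support in $\overline{\R_+^2}$, which is admissible after a standard mollification. It is worth emphasizing that dimension two enters \emph{only} through the logarithmic cutoff. Equivalently, one could first reflect $v$ evenly across $\{y=0\}$ to obtain a nonnegative superharmonic function on all of $\R^2$ and then invoke the classical Liouville theorem for such functions; but the direct computation above incorporates the boundary condition automatically and avoids this detour.
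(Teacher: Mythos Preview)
Your argument is correct. The paper itself does not prove this proposition: it is quoted from Cabr\'e--Tan \cite{Cabre-Tan-2010} (with an additional pointer to Chipot et al.\ \cite{Chipot et al-1998}) and used as a black box, so there is no ``paper's own proof'' to compare against here.

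That said, your write-up supplies exactly the kind of argument those references use. Testing the weak inequality with $\eta^2(v+\ep)^{-1}$ and then killing the right-hand side via the logarithmic cutoff is the standard route to the two-dimensional Liouville theorem for nonnegative superharmonic functions; your observation that the single inequality $\int\na v\cdot\na\varphi\ge0$ for nonnegative $\varphi\in C_c^\infty(\overline{\R_+^2})$ already absorbs the Neumann condition is what makes the half-plane case no harder than the full-plane case. The alternative you mention at the end --- reflect evenly across $\{y=0\}$ and invoke the classical Liouville theorem for nonnegative superharmonic functions on $\R^2$ --- is essentially the same computation after unfolding, and is closer in spirit to how the cited references phrase it. Either way the proof is sound; the only genuinely two-dimensional ingredient is the vanishing capacity $\int|\na\eta|^2\to0$, which you have identified correctly.
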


\section{Proof of Theorem \ref{thm: Main reuslt}\label{sec: Proof-of-main result}}

In this section we prove Theorem \ref{thm: Main reuslt}. As already
reviewed the approach of Cabr\'e and Tan \cite{Cabre-Tan-2010} in
the introduction part, to prove Theorem \ref{thm: Main reuslt}, we
only need to prove that Eq. (\ref{eq: Cabre-Tan-1D}) has no classical
solution. We use the following lemma as a key gradient of the proof.

\begin{lemma} \label{lem: monotonicity}Suppose that $u$ is a classical
positive solution to Eq. (\ref{eq: Cabre-Tan-1D}). Then $u_{x}(x,y)>0$
for all $(x,y)\in\overline{\R_{++}^{2}}$. \end{lemma}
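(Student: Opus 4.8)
The plan is to establish the monotonicity by the method of moving planes in the $x$-direction, combined with an energy (integral) estimate in place of a pointwise comparison principle, the latter being unavailable since $u$ is not assumed bounded and the quarter plane is unbounded. For $\lambda>0$ set $\Sigma_\lambda=\{0<x<\lambda,\ y>0\}$, write the reflection across $\{x=\lambda\}$ as $x^\lambda=2\lambda-x$, and put $u_\lambda(x,y)=u(x^\lambda,y)$ and $w_\lambda=u_\lambda-u$. Since $u$ is harmonic, so is $w_\lambda$ in $\Sigma_\lambda$. On $\{x=\lambda\}$ we have $w_\lambda=0$; on $\{x=0\}$ we have $w_\lambda(0,y)=u(2\lambda,y)>0$ because $u=0$ there; and a direct computation from the boundary law $-u_y=u^p$ under the reflection gives, on $\Gamma_\lambda:=\{y=0,\ 0<x<\lambda\}$, the conormal condition $\partial w_\lambda/\partial\nu=u_\lambda^p-u^p$. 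The goal is to prove $w_\lambda\ge0$ in $\Sigma_\lambda$ for every $\lambda>0$; this yields $u(x,y)\le u(2\lambda-x,y)$ for $0<x<\lambda$, hence $u_x\ge0$, after which Hopf's lemma and the strong maximum principle will upgrade this to the strict inequality $u_x>0$ on $\overline{\R_{++}^2}$ claimed in the statement.

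The heart of the matter is an energy estimate obtained by testing $\Delta w_\lambda=0$ against the negative part $w_\lambda^-=\max(-w_\lambda,0)$ and integrating by parts. Since $w_\lambda^-$ vanishes on $\{x=0\}$ and $\{x=\lambda\}$, the only interior boundary contribution comes from $\Gamma_\lambda$, where the favorable sign $(u_\lambda^p-u^p)\,w_\lambda^-\le 0$ (valid because $p\ge1$ and both functions are nonnegative) holds; bounding $u^p-u_\lambda^p\le p\,u^{p-1}w_\lambda^-$ by convexity of $t\mapsto t^p$ gives
\[
\int_{\Sigma_\lambda}|\nabla w_\lambda^-|^2\ \le\ p\int_{\Gamma_\lambda}u^{p-1}(w_\lambda^-)^2\,dx .
\]
The task is then to absorb the right-hand side into the left. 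On $\Gamma_\lambda$ one has $u\le M_\lambda:=\sup_{\Gamma_\lambda}u$, and since $u$ is continuous with $u(0,0)=0$, we have $M_\lambda\to0$ as $\lambda\to0$; combining this with a trace inequality and the Sobolev--Poincar\'e inequality of Lemma \ref{lem: Sobolev-Poincare inequality} (applied with $\Gamma=\Gamma_\lambda$, which precisely allows $w_\lambda^-$ to be nonzero on $\{y=0\}$), the volume factor $|\Sigma_\lambda|^{1/q}$ renders the right-hand side strictly smaller than the left for all sufficiently small $\lambda$. This forces $\nabla w_\lambda^-\equiv0$ and hence $w_\lambda^-\equiv0$, so the moving-plane process starts: $w_\lambda\ge0$ for all small $\lambda>0$.

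To continue, set $\lambda_0=\sup\{\mu>0:\ w_\lambda\ge0\text{ in }\Sigma_\lambda\text{ for all }0<\lambda\le\mu\}$ and suppose $\lambda_0<\infty$. By continuity $w_{\lambda_0}\ge0$, and the strong maximum principle together with the boundary data forces $w_{\lambda_0}>0$ in the open region. Re-running the energy estimate for $\lambda$ slightly larger than $\lambda_0$, the set where $w_\lambda$ can be negative is confined and has controllably small size, so Lemma \ref{lem: Sobolev-Poincare inequality} again closes the estimate and contradicts the maximality of $\lambda_0$; hence $\lambda_0=\infty$ and $u_x\ge0$ everywhere. For strictness, note $v:=u_x$ is harmonic and nonnegative: if it vanished at an interior point it would vanish identically, forcing $u$ to be $x$-independent and hence $u\equiv u(0,y)=0$, a contradiction; and at a boundary zero on $\{y=0\}$ the Robin identity $-v_y=p\,u^{p-1}v=0$ contradicts the sign Hopf's lemma would produce, while on $\{x=0\}$ Hopf applied to $u$ itself already gives $u_x>0$.

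The main obstacle, where essentially all the work lies, is the simultaneous unboundedness of the domain and the solution. The energy identity must be justified on the infinite strip $\Sigma_\lambda$: one truncates at $\{y=R\}$ and must show the extra boundary flux there is negligible along a suitable sequence $R\to\infty$, a two-dimensional logarithmic-cutoff argument, while keeping the absorption constant from degenerating as $R\to\infty$. It is exactly to bound these integrals on large-but-bounded pieces with constants carrying the favorable factor $|\Omega|^{1/q}$ that Lemma \ref{lem: Sobolev-Poincare inequality} is designed, and reconciling the smallness coming from $|\Sigma_\lambda|$ and from $M_\lambda$ near $\{x=0\}$ with the non-compactness in the $y$-direction is the delicate point; conceptually, the whole scheme is the integrated form of the infinitesimal observation that $v=u_x$ satisfies the linear Robin condition $\partial v/\partial\nu=p\,u^{p-1}v$ on $\{y=0\}$ with the nonnegative coefficient $p\,u^{p-1}$, which is the favorable sign driving the argument.
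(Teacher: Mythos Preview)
Your outline identifies the right mechanism but leaves the central difficulty unresolved, and that difficulty is exactly what the paper's proof is built to avoid. With the moving-plane set-up the comparison domain $\Sigma_\lambda=\{0<x<\lambda,\ y>0\}$ is an infinite strip, so neither the integration-by-parts identity for $w_\lambda^-$ nor the Sobolev--Poincar\'e inequality of Lemma~\ref{lem: Sobolev-Poincare inequality} is available as stated: the lemma is for \emph{bounded} Lipschitz domains and its constant carries the factor $|\Omega|^{1/q}$, which is infinite on $\Sigma_\lambda$ and blows up like $(\lambda R)^{1/q}$ on the truncation $\{y<R\}$. You note this is ``the delicate point'' and propose a logarithmic cutoff plus a careful tracking of constants, but the cutoff argument needs some a priori integrability of $w_\lambda^-$ (or a bound on $u$) as $y\to\infty$ to control the error terms, and none is assumed. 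So the starting step ``$w_\lambda\ge0$ for small $\lambda$'' is not actually justified, and the same obstruction reappears at the continuation step.

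The paper circumvents this by replacing moving planes with moving \emph{spheres}: it reflects $u$ through circles of radius $\lambda$ centered at $z_R=(-R,0)$ and compares on the \emph{bounded} region $\Omega_{R,\lambda}=B_\lambda(z_R)\cap\R_{++}^2$. On a bounded domain the integration by parts is honest, Lemma~\ref{lem: Sobolev-Poincare inequality} applies with a finite $|\Omega_{R,\lambda}|^{1/q}$, and the absorption works because the boundary ``bad set'' $A_\lambda$ has length at most $\lambda-R$ initially and shrinks to zero at the critical $\bar\lambda(R)$. Only after the inequality $u<u_{R,\lambda}$ is secured for all $\lambda>R$ does one let $R\to\infty$, at which point the spheres flatten into the planes $\{x=a\}$ and the desired monotonicity $u(x_1,y_0)\le u(x_2,y_0)$ falls out. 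In short, the missing idea in your proposal is to make the comparison domain bounded \emph{before} running the energy estimate; the passage to the unbounded picture is a harmless limit taken at the very end, not an obstacle to be fought throughout.
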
 

Before giving a proof of Lemma \ref{lem: monotonicity}, we will apply
Lemma \ref{lem: monotonicity} to prove Theorem \ref{thm: Main reuslt}.

\begin{proof}[Proof of Theorem \ref{thm: Main reuslt}] Suppose that
$u$ is a positive solution to Eq. (\ref{eq: Cabre-Tan-1D}). Define
the odd extension $\bar{u}:\R_{+}^{2}\to\R$ of $u$ by 
\[
\bar{u}(x,y)=\begin{cases}
u(x,y) & \text{if }x\ge0\\
-u(-x,y) & \text{if }x\le0.
\end{cases}
\]
Since $u(0,y)\equiv0$ for $y\ge0$, it is elementary to find that
$\bar{u}$ solves equation 
\[
\begin{cases}
\De\bar{u}=0 & \text{in }\R_{+}^{2},\\
{\displaystyle \frac{\pa\bar{u}}{\pa\nu}}=|\bar{u}|^{p-1}\bar{u} & \text{on }\pa\R_{+}^{2}.
\end{cases}
\]
Furthermore, we deduce from above equation that $\bar{u}_{x}$ satisfies
\begin{equation}
\begin{cases}
\De\bar{u}_{x}=0 & \text{in }\R_{+}^{2},\\
\bar{u}_{x}(x,y)=u_{x}(|x|,y)>0 & \text{in }\R_{+}^{2},\\
{\displaystyle \frac{\pa\bar{u}_{x}}{\pa\nu}}=p|\bar{u}|^{p-1}\bar{u}_{x}\ge0 & \text{on }\pa\R_{+}^{2}.
\end{cases}\label{eq: a reduced eq}
\end{equation}

Applying Proposition \ref{prop: Proposition of 6.2} to Eq. (\ref{eq: a reduced eq})
gives that $\bar{u}_{x}\equiv C$ in $\R_{+}^{2}$ for some constant
$C>0$. Since $\bar{u}(0,y)\equiv0$ for $y\ge0$, we derive that
$\bar{u}(x,y)=Cx$ for all $(x,y)\in\R_{+}^{2}.$ But then, it follows
that $\pa_{\nu}u\equiv0\neq u^{p}$ on $\{x>0,y=0\}$. We reach a
contradiction. The proof of Theorem \ref{thm: Main reuslt} is complete.
\end{proof}

Now we prove Lemma \ref{lem: monotonicity}. We will employ the method
of moving spheres (see Li, Zhang and Zhu \cite{LiYY-2004,Li-Zhang-2003,Li-Zhu-1995}),
a variant of the method of moving planes invented by the Soviet mathematician
Alexanderov in the early 1950s, and later further developed by Serrin
\cite{Serrin-1971}, Gidas et al. \cite{Gidas-Ni-Nirenberg-1979},
Caffarelli et al. \cite{Caffarelli et al. 1989}, Li \cite{LiCM-1996},
Chen and Li \cite{Chen-Li-1991,Chen-Li-1997}, Chang and Yang \cite{Chang-Yang-1997},
Chen et al. \cite{Chen-Li-Ou-2006} and many others. We also make
use of the idea of narrow domains from Berestycki and Nirenberg \cite{Berestycki-Nirenberg-1988}.

\begin{proof}[Proof of Lemma \ref{lem: monotonicity}] First we introduce
some notations for convenience. Denote the point in the plane by $z=(x,y)\in\R^{2}$.
Let $\la,R\in(0,\wq)$, $\la>R$, be arbitrary positive constants
and write $z_{R}=(-R,0)$. For any positive solution $u$ of Eq. (\ref{eq: Cabre-Tan-1D}),
define the function $u_{R,\la}:\Om_{R,\la}\to[0,\wq)$ by 
\begin{eqnarray*}
u_{R,\la}(z)=u\left(z_{R}+\frac{\la^{2}(z-z_{R})}{|z-z_{R}|^{2}}\right) &  & \text{for }z\in\Om_{R,\la},
\end{eqnarray*}
where $\Om_{R,\la}$ is the bounded domain given by 
\[
\Om_{R,\la}=B_{\la}(z_{R})\cap\R_{++}^{2}.
\]

Since $u$ solves Eq. (\ref{eq: Cabre-Tan-1D}), a direct calculation
shows that $u_{R,\la}$ satisfies 
\begin{equation}
\begin{cases}
\De u_{R,\la}=0 & \text{in }\Om_{R,\la},\\
u_{R,\la}>0 & \text{in }\Om_{R,\la},\\
u_{R,\la}=u & \text{on }\pa\Om_{R,\la}\cap\R_{++}^{2},\\
{\displaystyle \frac{\pa u_{R,\la}}{\pa\nu}}=\left(\frac{\la}{|z-z_{R}|}\right)^{2}u_{R,\la}^{p}(z) & \text{on }\pa\Om_{R,\la}\cap\{x>0,y=0\}.
\end{cases}\label{eq: transformed eq}
\end{equation}
Our aim is to show that 
\begin{eqnarray}
u(z)<u_{R,\la}(z) &  & \text{in }\Om_{R,\la}\label{eq: comparison}
\end{eqnarray}
for all $\la,R\in(0,\wq)$ with $\la>R$. 

Let $R>0$ be fixed. First we show that (\ref{eq: comparison}) holds
when $\la-R>0$ is sufficiently small. To this end, set $w_{\la}(z)=u(z)-u_{R,\la}(z)$
for $z\in\Om_{R,\la}$. We have that 
\begin{equation}
\begin{cases}
\De w_{\la}=0 & \text{in }\Om_{R,\la},\\
w_{\la}=0 & \text{on }\pa\Om_{R,\la}\cap\R_{++}^{2},\\
w_{\la}<0 & \text{on }\pa\Om_{R,\la}\cap\{x=0,y>0\},\\
{\displaystyle \frac{\pa w_{\la}}{\pa\nu}}=u^{p}-\left(\frac{\la}{|z-z_{R}|}\right)^{2}u_{R,\la}^{p} & \text{on }\pa\Om_{R,\la}\cap\{x>0,y=0\}.
\end{cases}\label{eq: Eq of comparison}
\end{equation}
Multiply Eq. (\ref{eq: Eq of comparison}) by $w_{\la}^{+}\equiv\max(w_{\la},0)$
and integrate by parts. We deduce that 
\[
\int_{\Om_{R,\la}}|\na w_{\la}^{+}|^{2}=\int_{\pa\Om_{R,\la}\cap\{x>0,y=0\}}w_{\la}^{+}\left(u^{p}-\left(\frac{\la}{|z-z_{R}|}\right)^{2}u_{R,\la}^{p}\right).
\]
Denote 
\[
A_{\la}=\left\{ z\in\pa\Om_{R,\la}\cap\{x>0,y=0\}:w_{\la}(z)>0\right\} .
\]
Since $\la>|z-z_{R}|$ on $\pa\Om_{R,\la}\cap\{x>0,y=0\}$ and $p\ge1$,
we have that 
\[
\int_{\pa\Om_{R,\la}\cap\{x>0,y=0\}}w_{\la}^{+}\left(u^{p}-\left(\frac{\la}{|z-z_{R}|}\right)^{2}u_{R,\la}^{p}\right)\le\int_{A_{\la}}pu^{p-1}\left(w_{\la}^{+}\right)^{2}.
\]
By the local boundedness of $u(x,0)$ for $x>0$, we have that 
\[
\int_{A_{\la}}pu^{p-1}\left(w_{\la}^{+}\right)^{2}\le p\sup_{x\in A_{\la}}u^{p-1}(x,0)\int_{A_{\la}}\left(w_{\la}^{+}\right)^{2}.
\]
Hence combining above estimates together with H\"older's inequality
gives that
\[
\int_{\Om_{R,\la}}|\na w_{\la}^{+}|^{2}\le p\left(\sup_{x\in A_{\la}}u^{p-1}(x,0)\right)|A_{\la}|^{1-\frac{2}{q}}\left(\int_{\Om_{R,\la}}\left(w_{\la}^{+}\right)^{q}\right)^{\frac{2}{q}},
\]
where $2<q<\wq$ is a fixed number. Note that $w_{\la}^{+}=0$ on
$\pa\Om_{R,\la}\cap\{y>0\}$. Applying Lemma \ref{lem: Sobolev-Poincare inequality}
with $\Om=\Om_{R,\la}$, we deduce that 
\begin{equation}
\int_{\Om_{R,\la}}|\na w_{\la}^{+}|^{2}\le C_{p,q}\left(\sup_{x\in A_{\la}}u^{p-1}(x,0)\right)|\Om_{R,\la}|^{\frac{2}{q}}|A_{\la}|^{1-\frac{2}{q}}\int_{\Om_{R,\la}}|\na w_{\la}^{+}|^{2},\label{eq: Self-vanishing ineq.}
\end{equation}
where $C_{p,q}>0$ is a constant depending only on $p$ and $q$. 

Note that $|A_{\la}|\le\la-R$. Thus, it is easy to infer from inequality
(\ref{eq: Self-vanishing ineq.}) that (\ref{eq: comparison}) holds
when $\la-R>0$ is sufficiently small. 

Next we show that for any fixed $R>0$, (\ref{eq: comparison}) holds
for all $\la\in(R,\wq)$. To this end, set 
\[
\bar{\la}(R)=\{\mu\in(R,\infty):(\ref{eq: comparison})\text{ holds for all }R<\la<\mu.\}
\]
We claim that $\bar{\la}(R)=\wq$. Argue by contradiction. Suppose
that $\bar{\la}(R)<\wq$ holds. Then by continuity, we have that $u\le u_{R,\bar{\la}(R)}$
in $\Om_{R,\bar{\la}(R)}$. Since $u<u_{R,\bar{\la}(R)}$ on $\pa\Om_{R,\bar{\la}(R)}\cap\{x=0,y>0\}$,
we deduce that $u<u_{R,\bar{\la}(R)}$ in $\Om_{R,\bar{\la}(R)}$
by the strong maximum principle. Therefore we infer that 
\begin{eqnarray*}
|A_{\la}|\to0 &  & \text{as }\la\downarrow\bar{\la}(R).
\end{eqnarray*}
Thus, there exists a sufficiently small number $\de>0$, such that
\[
C_{p,q}\left(\sup_{0<x<\bar{\la}(R)+\de}u^{p-1}(x,0)\right)\left|\Om_{R,\bar{\la}(R)+\de}\right|^{\frac{2}{q}}|A_{\la}|^{1-\frac{2}{q}}<\frac{1}{2}
\]
for all $\la\in(\bar{\la}(R),\bar{\la}(R)+\de)$. Then combining above
estimate together with inequality (\ref{eq: Self-vanishing ineq.})
yields that $u\le u_{R,\la}$ in $\Om_{R,\la}$ for all $\la\in(\bar{\la}(R),\bar{\la}(R)+\de)$.
This is against the choice of $\bar{\la}(R)$. Hence we conclude that
$\bar{\la}(R)=\wq$. In this way, we show that for any fixed $R>0$,
(\ref{eq: comparison}) holds for all $\la\in(R,\wq)$.

Now we can finish the proof of Lemma \ref{lem: monotonicity}. Let
$(x_{1},y_{0})$ and $(x_{2},y_{0})$, $0<x_{1}<x_{2}$, be two arbitrary
points in $\R_{++}^{2}.$ Then for all $R>0$ sufficiently large,
we have $(x_{1},y)\in B_{R+a}(z_{R})\cap\R_{++}^{2}$, where $a=(x_{1}+x_{2})/2$.
Then applying (\ref{eq: comparison}) with $\la=R+a$ gives that 
\[
u(x_{1},y_{0})<u_{R,R+a}(x_{1},y_{0})
\]
for all $R>0$ sufficiently large. Letting $R\to\wq$ in the above
inequality yields that 
\[
u(x_{1},y_{0})\le u(2a-x_{1},y_{0})=u(x_{2},y_{0}).
\]
This shows that $u$ is monotone increasing in the $x$-direction,
that is, $u_{x}\ge0$ in $\R_{++}^{2}$. 

To derive the strict inequality in Lemma \ref{lem: monotonicity},
we note that $u_{x}$ is also a harmonic function in $\R_{+}^{2}$
and $\pa_{\nu}u_{x}=pu^{p-1}u_{x}\ge0$ on $\{x>0,y=0\}$. Hence it
follows from the strong maximum principle that $u_{x}>0$ in $\R_{++}^{2}$,
and from the Hopf lemma that $u_{x}(0,y)>0$ on $\pa\R_{++}^{2}$.
The proof of Lemma \ref{lem: monotonicity} is complete. \end{proof}

\section{An extension}

Recall that we mentioned the quite general nonexistence result of
Cabr\'e and Tan \cite[Proposition 6.4]{Cabre-Tan-2010} in the introduction
part. It states as follows. 

\begin{proposition} \label{prop: Proposition of 6.4} Assume that
$f$ is a $C^{1,\al}$ function for some $\al\in(0,1)$, such that
$f>0$ in $(0,\wq)$ and $f(0)=0$. Let $C$ be a positive constant.
Then there is no bounded solution of the problem 
\begin{equation}
\begin{cases}
\De u=0, & \text{in }\R_{++}^{2},\\
0<u(x,y)\le C, & \text{in }\R_{++}^{2},\\
u(0,y)=0 & \text{on }\{x=0,y\ge0\},\\
{\displaystyle \frac{\pa u}{\pa\nu}}=f(u) & \text{on }\{x>0,y=0\}.
\end{cases}\label{eq: generalization-1}
\end{equation}
 \end{proposition}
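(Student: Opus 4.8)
The plan is to mimic the two-step scheme behind Theorem~\ref{thm: Main reuslt}: first show that every bounded positive solution $u$ of \eqref{eq: generalization-1} is nondecreasing in $x$, and then slide $u$ to $x=+\wq$ to reach a contradiction. Since \eqref{eq: generalization-1} is already posed in the quarter plane, no symmetry reduction is needed.

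For the monotonicity I would rerun the moving spheres argument of Lemma~\ref{lem: monotonicity} with $f$ in place of $u^{p}$. The only two features of the nonlinearity used there are that it is nonnegative and that $u$ is bounded on the bottom boundary, so nothing forces $f$ to be a pure power. Precisely, for $z_{R}=(-R,0)$ and $\la>R$ define $u_{R,\la}$ and $\Om_{R,\la}$ as before; because harmonicity and the boundary line $\{y=0\}$ are preserved by the two dimensional inversion centered at $z_{R}$, the function $u_{R,\la}$ solves the analogue of \eqref{eq: transformed eq} with Neumann datum $\left(\la/|z-z_{R}|\right)^{2}f(u_{R,\la})$. Setting $w_{\la}=u-u_{R,\la}$ and testing with $w_{\la}^{+}$, the bound $\left(\la/|z-z_{R}|\right)^{2}\ge1$ together with $f(u_{R,\la})\ge0$ gives $\pa_{\nu}w_{\la}\le f(u)-f(u_{R,\la})$ on $\pa\Om_{R,\la}\cap\{x>0,y=0\}$, while the mean value theorem and the global bound $0<u\le C$ yield $f(u)-f(u_{R,\la})\le\left(\sup_{[0,C]}|f'|\right)w_{\la}^{+}$ on the set where $w_{\la}>0$. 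Lemma~\ref{lem: Sobolev-Poincare inequality} then produces exactly the self-improving inequality \eqref{eq: Self-vanishing ineq.}, with the fixed constant $\sup_{[0,C]}|f'|$ playing the role of $p\sup u^{p-1}$. The remainder of the proof of Lemma~\ref{lem: monotonicity} (smallness of $|A_{\la}|$, continuation in $\la$, and then $R\to\wq$) carries over verbatim and gives $u_{x}\ge0$ in $\R_{++}^{2}$.

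For the final step the odd extension used for Theorem~\ref{thm: Main reuslt} is no longer available, because differentiating the reflected boundary condition produces the factor $f'(|\bar u|)$, whose sign is not controlled, so Proposition~\ref{prop: Proposition of 6.2} cannot be applied to $\bar u_{x}$. Instead I would translate. Put $u_{t}(x,y)=u(x+t,y)$; since $u$ is bounded and nondecreasing in $x$, $u_{t}$ converges pointwise as $t\to\wq$ to $v(y):=\lim_{s\to\wq}u(s,y)$, which is independent of $x$ and satisfies $0<u(1,y)\le v(y)\le C$. Uniform Schauder estimates for the family $\{u_{t}\}$ --- available because $u\le C$ keeps $f(u)$ uniformly bounded and $f\in C^{1,\al}$ --- upgrade this to convergence in $C^{1}_{\loc}(\overline{\R_{+}^{2}})$, so that $v$ is a bounded positive $x$-independent solution of the limiting problem. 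Being harmonic and independent of $x$ forces $v''\equiv0$, hence $v(y)=ay+b$; boundedness on $[0,\wq)$ gives $a=0$ and $v\equiv b>0$, while passing the boundary condition to the limit gives $-v'(0)=f(b)$, that is $f(b)=0$. This contradicts $f>0$ on $(0,\wq)$ together with $b>0$, and the proof is complete.

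The routine part is the monotonicity: once one notices that Lemma~\ref{lem: monotonicity} never used the homogeneity of $u^{p}$, only the nonnegativity of the nonlinearity and the a priori bound on $u$, its proof transfers with no essential change. The delicate point is the translation limit, where one must secure enough compactness up to the boundary $\{y=0\}$ to pass both the equation and the nonlinear Neumann condition to the $x$-independent limit $v$; this is exactly where the standing hypotheses $u\le C$ and $f\in C^{1,\al}$ are used, to furnish uniform interior and boundary estimates for the translates $u_{t}$.
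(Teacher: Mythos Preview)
The paper does not itself prove this proposition: it is quoted from Cabr\'e and Tan \cite{Cabre-Tan-2010}, and the proof alluded to in the introduction relies on a Hamiltonian identity of Cabr\'e and Sol\`a-Morales \cite{Cabre-Sola-Morales-2005}. Your argument is therefore a genuinely different route, and it is correct.

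Your first step --- rerunning the moving-spheres proof of Lemma~\ref{lem: monotonicity} with $f$ in place of $t\mapsto t^{p}$ --- is exactly what the paper does in the proof of Theorem~\ref{thm: an extension}; the global bound $0<u\le C$ makes $\sup_{[0,C]}|f'|$ the right replacement for $p\sup u^{p-1}$ in \eqref{eq: Self-vanishing ineq.}. For the second step, you are right that the paper's odd-extension device (used for Theorem~\ref{thm: an extension}) requires $f'\ge 0$ to feed $\bar u_{x}$ into Proposition~\ref{prop: Proposition of 6.2}, and so is unavailable here. Your sliding argument is a clean substitute: monotonicity plus the a~priori bound give a pointwise limit $v(y)=\lim_{s\to\infty}u(s,y)$, uniform boundary Schauder estimates (this is where $u\le C$ and $f\in C^{1,\al}$ enter) upgrade this to $C^{1}_{\loc}(\overline{\R_{+}^{2}})$-convergence of the translates, and the $x$-independent harmonic bounded limit must be a positive constant $b$ with $f(b)=0$, contradicting $f>0$ on $(0,\infty)$. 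In effect, the Hamiltonian identity, the odd-extension Liouville argument, and your translation limit are three different ways to exploit monotonicity in $x$; the first and third need boundedness of $u$ but allow general $f$, while the second drops boundedness at the price of assuming $f$ nondecreasing.
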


In this section, we give an extension of Theorem \ref{thm: Main reuslt}
in the case $n=1$, which can be seen as an analogue of Proposition
\ref{prop: Proposition of 6.4}.

\begin{theorem} \label{thm: an extension}Assume that $f:[0,\wq)\to[0,\wq)$
is a nondecreasing $C^{1}$ function with $f(0)=0$, and that $u$
is a nonnegative classical solution to the problem
\begin{equation}
\begin{cases}
\De u=0, & \text{in }\R_{++}^{2},\\
u(x,y)\ge0, & \text{in }\R_{++}^{2},\\
u(0,y)=0 & \text{on }\{x=0,y\ge0\},\\
{\displaystyle \frac{\pa u}{\pa\nu}}=f(u) & \text{on }\{x>0,y=0\}.
\end{cases}\label{eq: generalization}
\end{equation}
Then there exists a constant $C\ge0$ such that 
\begin{eqnarray*}
u(x,y)=Cx &  & \text{for }(x,y)\in\R_{++}^{2}.
\end{eqnarray*}
 \end{theorem}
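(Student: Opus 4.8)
The plan is to follow the proof of Theorem~\ref{thm: Main reuslt} almost line by line, the two differences being that the power nonlinearity $u^{p}$ is replaced by the general monotone $f$ and that $u$ is only assumed nonnegative rather than strictly positive; moreover the conclusion is reached not by a contradiction but by integrating the identity $\bar u_{x}\equiv C$ directly. Before anything else I would dispose of the degenerate case: since $u$ is harmonic and $u\ge 0$ in $\R_{++}^{2}$, the minimum principle forces either $u\equiv 0$, in which case $u=Cx$ with $C=0$, or $u>0$ throughout $\R_{++}^{2}$. So from now on I may assume $u>0$ in the open quarter plane.

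The heart of the matter is to show $u_{x}\ge 0$ by rerunning the method of moving spheres from the proof of Lemma~\ref{lem: monotonicity}. With $u_{R,\la}$ defined as there, the transformed equation \eqref{eq: transformed eq} holds with $u_{R,\la}^{p}$ replaced by $f(u_{R,\la})$, so that on the set $A_{\la}$ where $w_{\la}=u-u_{R,\la}>0$ the crucial one-sided estimate becomes
\[
f(u)-\left(\frac{\la}{|z-z_{R}|}\right)^{2}f(u_{R,\la})\le f(u)-f(u_{R,\la})\le\Big(\sup_{[0,M]}f'\Big)\,w_{\la}^{+},
\]
where $M$ is any bound for $u$ on the relevant boundary segment. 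This uses only that $f$ is nondecreasing and $C^{1}$ (mean value theorem), that $\la>|z-z_{R}|$ on $\pa\Om_{R,\la}\cap\{x>0,y=0\}$, and that $f\ge 0$. Crucially, no global boundedness of $u$ is needed: because $u$ is a classical solution, $u(\cdot,0)$ is continuous, hence bounded on every bounded segment, so $\sup_{[0,M]}f'$ is finite. With $p\,\sup u^{p-1}$ replaced by $\sup_{[0,M]}f'$, the self-vanishing inequality \eqref{eq: Self-vanishing ineq.} and the continuity argument proving $\bar\la(R)=\wq$ carry over verbatim; here the strong maximum principle step requires $w_{\la}<0$ on $\{x=0,y>0\}$, which holds because the reflected points lie in the open quarter plane where $u>0$. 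Letting $R\to\wq$ in \eqref{eq: comparison} then gives $u_{x}\ge 0$ in $\R_{++}^{2}$.

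It remains to upgrade monotonicity to the linear form, exactly as in the proof of Theorem~\ref{thm: Main reuslt}. I would form the odd extension $\bar u$ of $u$, which solves $\De\bar u=0$ in $\R_{+}^{2}$ with $\pa_{\nu}\bar u=g(\bar u)$ on $\pa\R_{+}^{2}$, where $g$ is the odd extension of $f$; since $f(0)=0$ and $f\in C^{1}$, one checks that $g'(t)=f'(|t|)$ is continuous, so $g\in C^{1}$ and is nondecreasing. Differentiating, $\bar u_{x}$ is harmonic, satisfies $\bar u_{x}(x,y)=u_{x}(|x|,y)\ge 0$, and $\pa_{\nu}\bar u_{x}=g'(\bar u)\,\bar u_{x}\ge 0$ on $\pa\R_{+}^{2}$. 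Proposition~\ref{prop: Proposition of 6.2} then yields $\bar u_{x}\equiv C$ for some constant $C\ge 0$. Finally, $\bar u_{x}\equiv C$ together with $\De\bar u=0$ forces $\bar u(x,y)=Cx+ay+b$, and the boundary condition $\bar u(0,y)\equiv 0$ gives $a=b=0$; hence $u(x,y)=Cx$ for all $(x,y)\in\R_{++}^{2}$, as claimed.

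I expect the main obstacle to be the monotonicity step, specifically verifying that the moving-spheres scheme still closes once the convexity/power structure of $u^{p}$ is dropped and $u$ is allowed to vanish. The exponent $p\ge 1$ entered the original argument only through $u^{p}-u_{R,\la}^{p}\le p\,u^{p-1}w_{\la}^{+}$, which I replace by the mean-value bound above, valid for any nondecreasing $f\in C^{1}$; the loss of strict positivity is handled once and for all by the initial minimum-principle dichotomy, after which $u>0$ restores every strict inequality used in the original proof. No analytic input beyond Lemma~\ref{lem: Sobolev-Poincare inequality}, Proposition~\ref{prop: Proposition of 6.2}, and the maximum principle is required.
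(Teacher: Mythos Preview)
Your proposal is correct and follows essentially the same approach as the paper's own proof: the moving-spheres argument of Lemma~\ref{lem: monotonicity} with $p\,u^{p-1}$ replaced by a local bound on $f'$, followed by the odd extension and an application of Proposition~\ref{prop: Proposition of 6.2} to $\bar u_{x}$. Your treatment is in fact slightly more careful than the paper's sketch, since you explicitly dispose of the case $u\equiv 0$, verify that the odd extension of $f$ remains $C^{1}$, and spell out the final integration $\bar u_{x}\equiv C\Rightarrow u=Cx$.
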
 
\begin{proof}
We only give a sketch of the proof. Let $u$ be a positive solution
to Eq. (\ref{eq: generalization-1}). 

First we show that $u$ is nondecreasing in the $x$-direction. Define
$\Om_{R,\la}$ and $u_{R,\la}:\Om_{R,\la}\to[0,\wq)$ as in the proof
of Lemma \ref{lem: monotonicity}. It is elementary to derive that
\[
\begin{cases}
\De u_{R,\la}=0 & \text{in }\Om_{R,\la},\\
u_{R,\la}\ge0 & \text{in }\Om_{R,\la},\\
u_{R,\la}=u & \text{on }\pa\Om_{R,\la}\cap\R_{++}^{2},\\
{\displaystyle \frac{\pa u_{R,\la}}{\pa\nu}}=\left(\frac{\la}{|z-z_{R}|}\right)^{2}f\left(u_{R,\la}\right) & \text{on }\pa\Om_{R,\la}\cap\{x>0,y=0\}.
\end{cases}
\]
Then set $w_{\la}=u-u_{R,\la}$ in $\Om_{R,\la}$. Since $f$ is nondecreasing
and continuously differentiable, we deduce that 
\[
\int_{\Om_{R,\la}}|\na w_{\la}^{+}|^{2}\le C_{p,q}\left(\sup_{0\le t\le\|u\|_{\wq,A_{\la}}}f^{\prime}(t)\right)|\Om_{R,\la}|^{\frac{2}{q}}|A_{\la}|^{1-\frac{2}{q}}\int_{\Om_{R,\la}}|\na w_{\la}^{+}|^{2},
\]
where $A_{\la}$ is defined as in the proof of Lemma \ref{lem: monotonicity}.
Above inequality is a counterpart of (\ref{eq: Self-vanishing ineq.}).
Thus we can conclude as in the proof of Lemma \ref{lem: monotonicity}
that $u$ is nondecreasing in the $x$-direction. 

Next, consider the odd extension $\bar{u}$ of $u$ with respect to
$\{x=0,y>0\}$. We deduce that $\bar{u}_{x}$ satisfies 
\[
\begin{cases}
\De\bar{u}_{x}=0 & \text{in }\R_{+}^{2},\\
\bar{u}_{x}(x,y)=u_{x}(|x|,y)\ge0 & \text{in }\R_{+}^{2},\\
{\displaystyle \frac{\pa\bar{u}_{x}}{\pa\nu}}=\bar{f}^{\prime}(\bar{u})\bar{u}_{x}\ge0 & \text{on }\pa\R_{+}^{2},
\end{cases}
\]
where $\bar{f}$ is the odd extension of $f$, that is, $\bar{f}(t)=f(t)$
for $t\ge0$ and $\bar{f}(t)=-f(-t)$ for $t<0$. Now Theorem \ref{thm: an extension}
follows from Proposition \ref{prop: Proposition of 6.2} easily. 
\end{proof}
In the spirit of Theorem \ref{thm: Cabre-Tan-2010}, we have the following
application of Theorem \ref{thm: an extension}. 

\begin{corollary} Assume that $f:[0,\wq)\to[0,\wq)$ is a nondecreasing
$C^{1}$ function with $f(0)=0$. Then, there exists no bounded solution
to the problem 
\[
\begin{cases}
A_{1/2}u=f(u) & \text{in }\R_{+}=\{x>0\},\\
u>0 & \text{in }\R_{+},\\
u(0)=0,
\end{cases}
\]
where $A_{1/2}$ is the square root of the Laplacian in $(0,\wq)$
with zero Dirichlet boundary conditions at $x=0$. \end{corollary}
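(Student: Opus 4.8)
The plan is to reduce the one-dimensional nonlocal problem to the local boundary value problem (\ref{eq: generalization}) in the quarter plane via the harmonic extension characterization of $A_{1/2}$ developed in Cabr\'e and Tan \cite{Cabre-Tan-2010}, and then to invoke Theorem \ref{thm: an extension} directly. Recall that for the half-line $(0,\wq)\subset\R$, the operator $A_{1/2}$ with zero Dirichlet datum at $x=0$ is realized through the harmonic extension to the cylinder $(0,\wq)\times(0,\wq)=\R_{++}^{2}$. Concretely, given a bounded solution $u$ on $\R_{+}$, I would let $v(x,y)$ be its harmonic extension, so that $v$ is harmonic in $\R_{++}^{2}$, satisfies $v(x,0)=u(x)$ for $x>0$, vanishes on $\{x=0,y\ge0\}$ (this encodes the Dirichlet condition $u(0)=0$), and satisfies $-\pa_{y}v(x,0)=A_{1/2}u(x)$. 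Hence the equation $A_{1/2}u=f(u)$ translates into the Neumann-type condition $\pa v/\pa\nu=f(v)$ on $\{x>0,y=0\}$, where $\nu$ is the outward unit normal.

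I would then verify that $v$ is a nonnegative classical solution of (\ref{eq: generalization}). By the regularity theory of Cabr\'e and Tan \cite{Cabre-Tan-2010}, $v\in C^{2}(\R_{++}^{2})\cap C^{1}(\overline{\R_{++}^{2}})$. Since $u$ is bounded, the Poisson representation of the extension shows that $v$ is bounded; and because the boundary data of $v$ are nonnegative ($u\ge0$ on $\{y=0\}$ and $v=0$ on $\{x=0\}$), the maximum principle yields $v\ge0$ throughout $\R_{++}^{2}$. Thus $v$ is a nonnegative classical solution of (\ref{eq: generalization}) with the nondecreasing $C^{1}$ nonlinearity $f$ satisfying $f(0)=0$, so Theorem \ref{thm: an extension} applies and produces a constant $C\ge0$ with $v(x,y)=Cx$ for all $(x,y)\in\R_{++}^{2}$. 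Restricting to the boundary $\{y=0\}$ gives $u(x)=v(x,0)=Cx$ for all $x>0$.

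Finally I would extract the contradiction from the boundedness hypothesis. Since $u$ is bounded on $\R_{+}$ while $x\mapsto Cx$ is unbounded on $(0,\wq)$ unless $C=0$, we must have $C=0$, whence $u\equiv0$ on $\R_{+}$; this contradicts the assumption $u>0$ in $\R_{+}$, and the claimed nonexistence follows. The main obstacle here is not the concluding contradiction, which is immediate, but rather setting up the extension correspondence cleanly: one must confirm that the Cabr\'e--Tan extension of a bounded solution of the nonlocal problem produces a genuine classical solution of (\ref{eq: generalization}) with all four conditions --- interior harmonicity, nonnegativity, the Dirichlet side condition on $\{x=0\}$, and the Neumann-type condition on $\{y=0\}$ --- holding in the classical sense, so that Theorem \ref{thm: an extension} applies verbatim.
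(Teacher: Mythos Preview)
Your proposal is correct and follows exactly the route the paper intends: the corollary is stated without proof as an application of Theorem \ref{thm: an extension} ``in the spirit of Theorem \ref{thm: Cabre-Tan-2010}'', meaning precisely that one passes from the nonlocal problem to the quarter-plane problem (\ref{eq: generalization}) via the Cabr\'e--Tan harmonic extension, invokes Theorem \ref{thm: an extension} to obtain $v(x,y)=Cx$, and then reads off the contradiction. Your final step, using boundedness to force $C=0$ and hence $u\equiv0$, is the right way to close the argument here --- note that the alternative contradiction used in the proof of Theorem \ref{thm: Main reuslt} (namely $\pa_{\nu}v\equiv0\neq f(v)$) would not suffice for general $f$ as in the corollary, since the hypotheses do not exclude $f\equiv0$.
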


\emph{Acknowledgment. }
The author is financially supported by the Academy of Finland, project
259224.

\end{document}